\documentclass[reqno]{amsart}


\usepackage[T1]{fontenc}
\usepackage{dsfont,colonequals,mathrsfs}
\usepackage{amsfonts,amssymb,amsmath}
\usepackage{enumerate}
\usepackage{units,url}
\usepackage[american]{babel}

\title{On the Banach-Mazur Type for Normed Spaces}
\author[R.\ Nittka]{Robin Nittka}
\address{University of Ulm\\Institute of Applied Analysis\\89069 Ulm\\Germany}
\email{robin.nittka@uni-ulm.de}
\date{January 4, 2009}

\subjclass[2000]{46B20}
\keywords{Banach-Mazur type, geometry of Banach spaces, linear dimension, weak convergence}

\theoremstyle{plain}
\newtheorem{theorem}{Theorem}[section]
\newtheorem{corollary}[theorem]{Corollary}
\newtheorem{proposition}[theorem]{Proposition}
\newtheorem{lemma}[theorem]{Lemma}

\theoremstyle{definition}
\newtheorem{remark}[theorem]{Remark}

\newtheorem{example}[theorem]{Example}
\newtheorem{definition}[theorem]{Definition}
\newtheorem{question}[theorem]{Question}

\newcommand{\coloneqq}{\colonequals}

\newcommand{\eps}{\varepsilon}
\renewcommand{\rho}{\varrho}
\renewcommand{\phi}{\varphi}
\newcommand{\wto}{\rightharpoonup}

\DeclareMathOperator{\Type}{T}
\DeclareMathOperator{\UType}{UT}
\DeclareMathOperator{\hull}{span}

\begin{document}

\begin{abstract}
	In order to measure qualitative properties we introduce
	a notion of a type for arbitrary normed spaces which measures the
	worst possible growth of partial sums of sequences weakly converging to zero.
	The ideas can be traced back to Banach and Mazur who used this type
	to compare the so-called linear dimension of classical Banach spaces.
	As an application we compare the linear dimension and investigate isomorphy of
	some classical Banach spaces.
\end{abstract}

\maketitle

\section{Introduction}
Since the beginning of functional analysis, a lot of questions have been
asked about the possible structure of Banach spaces, and
it turned out that one can construct spaces having the strangest properties.
Among the most prominent examples are the reflexive Tsirelson space
into which neither an $\ell^p$ space nor $c_0$ can be embedded~\cite{Tsi74},
a space found by Gowers and Maurey not containing an unconditional basic sequence~\cite{GM93},
and the recent positive solution to the
``scalar plus compact problem'' due to S.\ Argyros and R.\ Haydon.
A more detailed list of exotic properties that Banach spaces may exhibit
can be found in the survey articles of Maurey~\cite[Chapter~29]{JL01}
and Gowers~\cite{Gow94}.

Despite or maybe because of the abundance of vastly different Banach spaces
people started to classify the spaces on the basis of some of their most
important characteristics in order
to find some general principles how the various kinds of Banach spaces look like.
Simple criteria to distinguish spaces include qualitative properties such as
reflexivity or separability. More sophisticated investigations also aim towards
qualitative descriptions of the structure, for
example the celebrated Rademacher type and cotype which was first investigated
by Maurey and Pisier in the 1970s.
Soon it turned out that the type and cotype already carry enough information to
characterize the Hilbert spaces among all Banach spaces;
this is a deep result due to Kwapie\'n~\cite{Kwapien72}.
It should be pointed out that the Rademacher type is a local concept, i.e.,
its numerical value for a normed space $X$ depends only on the structure of
the finite dimensional subspaces of $X$.

In the present article we propose a new qualitative measure of the geometry of Banach spaces,
i.e., a new kind of type, which we call the \emph{Banach-Mazur type}.
This is a global concept which was implicitly introduced in Banach's book~\cite[XII.\S 3]{Banach32}
as joint work with Mazur for the special case of $L^p(0,1)$ and $\ell^p$ in order
to investigate the so-called \emph{linear dimension} of those spaces, see
Section~\ref{lineardimension}.
Although it is possible to determine the Banach-Mazur type
of $\ell^p$ by elementary calculations similar for example to the techniques of Banach's
proof of Schur's theorem~\cite[p.~137]{Banach32}, we follow a more elegant approach
based on properties of the canonical Schauder basis of $\ell^p$.
As an immediate application
we are able to prove the well-known fact that $\ell^p$ is not isomorphic to $\ell^q$
for $1 < p < q < \infty$, compare Remark~\ref{ellpellqnoniso}.
Another interesting point is that in contrast to the type and cotype introduced by Maurey and Pisier
the Banach-Mazur type is able to show that for any $p \in (2,\infty)$ 
the spaces $\ell^p$ and $L^p(0,1)$ are not isomorphic.

The article is organized as follows.
In Section~\ref{definition} we define the Banach-Mazur type and the unconditional
Banach-Mazur type. In Section~\ref{properties} we collect some properties which prove to
be useful for the calculation of the type.
Section~\ref{lineardimension} generalizes some ideas of~\cite[XII.\S 3]{Banach32} and shows how this
technique can be used to investigate the so-called \emph{linear dimension} of Banach spaces.
Section~\ref{sums} relates the unconditional Banach-Mazur type of a product of Banach spaces to
the unconditional Banach-Mazur type of its factors. This is interesting mainly because it provides us
with an example where the unconditional Banach-Mazur type is more useful than the Banach-Mazur type.
To actually compute the type of classical spaces, we will utilize basic facts from
the theory of Schauder bases. For the abstract setting this is done in Chapter~\ref{schauderbases}.
Finally, Section~\ref{examples} explicitly determines the Banach-Mazur types for
some classical Banach spaces by applying the results of the preceding sections.
Of course, a lot of interesting problems remain open after this rough investigation
of the Banach-Mazur type. Some questions are collected in Section~\ref{openproblems}.

\section{Definition}\label{definition}
\begin{definition}\label{maindef}
	Let $X$ be a normed vector space, $p \in [1,\infty]$.
	As usual, we set $\frac{1}{\infty} \coloneqq 0$.
	\begin{enumerate}[(i)]
	\item
		We say that
		$X$ is \emph{of Banach-Mazur type $p$} (or simply \emph{of type $p$})
		if the following condition is fulfilled.
		Whenever $(x_n)$ is a sequence in $X$ which converges weakly to $0$
		there exists a subsequence $(x_{n_k})$ of $(x_n)$ and a constant
		$C > 0$ such that $\bigl\| \sum_{k=1}^N x_{n_k} \bigr\| \le C N^{\nicefrac{1}{p}}$
		for all $N \in \mathds{N}$.
	\item\label{maindef:uncond}
		We say that $X$ is \emph{of unconditional Banach-Mazur type $p$}
		(or simply \emph{of unconditional type $p$}) if the following condition
		is fulfilled.
		Whenever $(x_n)$ is a sequence in $X$ which converges weakly to $0$
		there exists a subsequence $(x_{n_k})$ of $(x_n)$ such that for
		every subsequence $(x_{n_{k_\ell}})$ of $(x_{n_k})$ we find a constant $C>0$
		satisfying $\bigl\| \sum_{\ell=1}^N x_{n_{k_\ell}} \bigr\| \le C N^{\nicefrac{1}{p}}$
		for all $N \in \mathds{N}$.
	\item
		We call
		\begin{equation}\label{def:type}
			\Type(X) \coloneqq \sup \left\{ p \in [1,\infty] : X \text{ is of type $p$} \right\}
		\end{equation}
		the \emph{(Banach-Mazur) type of $X$} and
		\begin{equation}\label{def:utype}
			\UType(X) \coloneqq \sup \left\{ p \in [1,\infty] : X \text{ is of unconditional type $p$} \right\}
		\end{equation}
		the \emph{unconditional (Banach-Mazur) type of $X$}.
	\end{enumerate}
\end{definition}

\section{Basic Properties}\label{properties}

Some properties of the type and the unconditional type are immediately obvious.
\begin{enumerate}[(P1)]
\item
	Because every weakly convergent sequence is bounded,
	the sets in~\eqref{def:type} and \eqref{def:utype} are not empty, i.e.,
	$\Type(X) \ge 1$ and $\UType(X) \ge 1$.
\item\label{interval}
	If $X$ is of type $p$, then $X$ is also of type $q$ whenever $1 \le q \le p$,
	and similarly for the unconditional type.
	Thus the sets in~\eqref{def:type} and~\eqref{def:utype} are in
	fact intervals.
\item\label{untypetotype}
	If $X$ is of unconditional type $p$, then $X$ is also of type $p$,
	hence $\UType(X) \le \Type(X)$ for every normed space $X$.
\end{enumerate}

Now we compare the type of spaces which are related by inclusion or isomorphy.
\begin{lemma}\label{monotonicity}
	Let $X$ be a normed space of type $p \in [1,\infty]$ and let $U$ be a subspace of $X$
	equipped with the induced norm. Then also $U$ is of type $p$.
	A similar statement is true for the unconditional type.
	Hence $\Type(U) \ge \Type(X)$ and $\UType(U) \ge \UType(X)$.
\end{lemma}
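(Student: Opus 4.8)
The plan is to verify directly that $U$ inherits the defining condition for type $p$, and then to deduce the inequalities for $\Type$ and $\UType$ from the fact that these quantities are defined as suprema. The only substantive point is the relationship between the weak topology of $U$ and that of $X$; the rest is bookkeeping about norms.

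First I would note that if $(x_n)$ is a sequence in $U$ with $x_n \wto 0$ in $U$, then $x_n \wto 0$ in $X$ as well. Indeed, for any continuous linear functional $\psi$ on $X$ its restriction $\psi|_U$ is a continuous linear functional on $U$, so $\apply{\psi}{x_n} = \apply{\psi|_U}{x_n} \to 0$. (Equivalently, one may invoke the standard Hahn--Banach consequence that the weak topology of $U$ agrees with the subspace topology induced by the weak topology of $X$, but for our purposes only this one, easy direction is needed.)

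Next, since $X$ is of type $p$ and $(x_n)$ is weakly null in $X$, Definition~\ref{maindef} provides a subsequence $(x_{n_k})$ and a constant $C > 0$ with $\bigl\| \sum_{k=1}^N x_{n_k} \bigr\| \le C N^{\nicefrac{1}{p}}$ for all $N \in \mathds{N}$. Each partial sum lies in $U$ because $U$ is a linear subspace, and its norm is unchanged when computed in $U$ since $U$ carries the induced norm; hence the same subsequence and constant show that $U$ is of type $p$. The argument for the unconditional type is identical: the subsequence $(x_{n_k})$ supplied by the unconditional type $p$ property of $X$ also works in $U$, because for every further subsequence the partial sums again lie in $U$ and have the same norm there.

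Finally, every $p \in [1,\infty]$ for which $X$ is of type $p$ is also one for which $U$ is of type $p$, so passing to suprema in~\eqref{def:type} yields $\Type(U) \ge \Type(X)$, and similarly \eqref{def:utype} gives $\UType(U) \ge \UType(X)$. I do not anticipate any real obstacle; the single thing to get right is that the implication ``weakly null in $U$'' $\Rightarrow$ ``weakly null in $X$'' holds, which is exactly the direction we need and follows merely by restricting functionals.
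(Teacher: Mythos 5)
Your proposal is correct and follows essentially the same route as the paper: weakly null in $U$ implies weakly null in $X$ by restricting functionals, so the subsequence and constant supplied by the type $p$ (or unconditional type $p$) property of $X$ work verbatim in $U$, and the inequalities for $\Type$ and $\UType$ follow by taking suprema.
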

\begin{proof}
	Let $X$ be of type $p$ and
	let $(x_n)$ be a sequence in $U$ which converges weakly to $0$ in $U$.
	Then $(x_n)$ can also be considered as a sequence in $X$, and it
	is easily checked that it converges to $0$ weakly in $X$.
	Hence by definition $\bigl\| \sum_{k=1}^N x_{n_k} \bigr\| \le C N^{\nicefrac{1}{p}}$
	for some subsequence and some constant $C$.
	The same argument applies to the unconditional type.
\end{proof}

\begin{lemma}\label{completion}
	Let $X$ be a normed space and let $Y$ be
	its completion. Then $X$ is of type $p$ if and
	only if $Y$ is of type $p$.
	The same is true for the unconditional type.
	Thus $\Type(X) = \Type(Y)$ and $\UType(X) = \UType(Y)$.
\end{lemma}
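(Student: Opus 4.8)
The plan is to split the biconditional into its two halves, and likewise for the unconditional type. One half is immediate: $X$ sits isometrically in $Y$ as a dense subspace, so Lemma~\ref{monotonicity} already gives $\Type(X) \ge \Type(Y)$ and $\UType(X) \ge \UType(Y)$. Thus the real work is to show that type $p$ (resp.\ unconditional type $p$) of $X$ propagates \emph{up} to $Y$.

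To this end I would take a sequence $(y_n)$ in $Y$ with $y_n \wto 0$ and, using density of $X$, choose $x_n \in X$ with $\|x_n - y_n\| \le 2^{-n}$. The first step is to verify that $(x_n)$ is still weakly null, now regarded as a sequence in $X$: every $\phi \in X^*$ extends uniquely (and isometrically) to some $\tilde\phi \in Y^*$, whence $|\apply{\phi}{x_n}| = |\apply{\tilde\phi}{x_n}| \le |\apply{\tilde\phi}{y_n}| + \|\tilde\phi\|\,\|x_n - y_n\| \to 0$. Applying the hypothesis to $(x_n)$ then yields a subsequence $(x_{n_k})$ (in the unconditional case, one each of whose further subsequences is controlled) with $\bigl\| \sum_{k=1}^N x_{n_k} \bigr\| \le C N^{\nicefrac{1}{p}}$; since the norm of $X$ is the restriction of the norm of $Y$, the very same estimate holds in $Y$. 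A triangle inequality, using that the $n_k$ are distinct so $\sum_k \|y_{n_k} - x_{n_k}\| \le \sum_k 2^{-n_k} \le 1$ and that $N^{\nicefrac{1}{p}} \ge 1$, upgrades this to $\bigl\| \sum_{k=1}^N y_{n_k} \bigr\| \le (C+1) N^{\nicefrac{1}{p}}$, which is precisely what the definition of type $p$ for $Y$ requires. The unconditional case is handled identically, sub-subsequence by sub-subsequence, with the perturbation constant $1$ added to whatever bound $X$ supplies for each sub-subsequence.

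The one point needing care is that first step — keeping the approximants weakly null — but it amounts only to the natural identification of the bounded functionals on $X$ with those on $Y$ via unique continuous extension; everything else is a routine summable-perturbation estimate, so I do not anticipate a genuine obstacle here.
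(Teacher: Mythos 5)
Your proposal is correct and follows the same route as the paper: one direction via Lemma~\ref{monotonicity}, the other by approximating a weakly null sequence $(y_n)$ in $Y$ by $x_n \in X$ with summable errors, checking $(x_n)$ is weakly null in $X$, and transferring the partial-sum bound back by the triangle inequality to get the constant $C+1$. Your explicit verification of weak nullity via unique continuous extension of functionals is a detail the paper merely asserts, but the argument is otherwise identical.
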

\begin{proof}
	Because $X$ is a subspace of $Y$ one of the implications follows from Lemma~\ref{monotonicity}.
	Now let $X$ be of type $p$. Fix a sequence $(y_n)$ in $Y$ such that
	$y_n \wto 0$ in $Y$. Since $X$ is dense in $Y$ there exist vectors
	$x_n \in X$ such that $\| x_n - y_n \| < \frac{1}{2^n}$.
	Note that then also $x_n \wto 0$ in $X$. Hence by assumption there exist a
	subsequence $(x_{n_k})$ and a constant $C$
	such that $\bigl\| \sum_{k=1}^N x_{n_k} \bigr\| \le C N^{\nicefrac{1}{p}}$.
	For the corresponding subsequence $(y_{n_k})$ we have
	\[
		\Bigl\| \sum_{k=1}^N y_{n_k} \Bigr\|
			\le \Bigl\| \sum_{k=1}^N x_{n_k} \Bigr\| + \sum_{k=1}^N \bigl\| x_{n_k} - y_{n_k} \bigr\|
			\le C N^{\nicefrac{1}{p}} + 1
			\le (C+1) N^{\nicefrac{1}{p}}.
	\]
	This shows that $Y$ is of type $p$.
	The same proof works for the unconditional type.
\end{proof}

The Banach-Mazur type is non-local, i.e., it can not be computed from
its values for the finite dimensional subspaces.
However, it depends only on the structure of the separable subspaces.
We record this observation in the following lemma.
\begin{lemma}\label{separable}
	Let $X$ be a normed space. Then
	\[
		\Type(X) = \inf_{U} \Type(U)
		\quad\text{and}\quad
		\UType(X) = \inf_{U} \UType(U),
	\]
	where $U$ varies over all closed separable subspaces of $X$.
\end{lemma}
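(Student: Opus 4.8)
The plan is to establish, for each fixed $p \in [1,\infty]$, the equivalence that $X$ is of type $p$ if and only if every closed separable subspace of $X$ is of type $p$, and similarly for the unconditional type. Granting this, the identity $\Type(X) = \inf_U \Type(U)$ follows by elementary bookkeeping: the inequality ``$\le$'' is already contained in Lemma~\ref{monotonicity}, while for ``$\ge$'' one takes an arbitrary $p \in [1, \inf_U \Type(U))$, notes that then every closed separable subspace $U$ is of type $p$ --- here one uses that the set of $q \in [1,\infty]$ for which $U$ is of type $q$ is an interval containing $1$ --- deduces from the equivalence that $X$ is of type $p$, so $\Type(X) \ge p$, and finally lets $p$ increase to $\inf_U \Type(U)$. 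The reasoning for $\UType$ is identical.

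To prove the equivalence, the ``only if'' direction is Lemma~\ref{monotonicity} once more. For the ``if'' direction, suppose every closed separable subspace of $X$ is of type $p$, and let $(x_n)$ be a sequence in $X$ with $x_n \wto 0$. The key idea is to run the whole argument inside $U \coloneqq \overline{\hull}\{x_n : n \in \mathds{N}\}$, the closure in $X$ of the linear span of the $x_n$; this $U$ is a closed separable subspace of $X$, being the closed linear span of a countable set.

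The one step that needs a word of proof is that $x_n \wto 0$ continues to hold in $U$: by Hahn--Banach every $u^* \in U^*$ is the restriction to $U$ of some $x^* \in X^*$, and $\apply{x^*}{x_n} \to 0$ by hypothesis, so $\apply{u^*}{x_n} \to 0$. Since $U$ carries the norm induced from $X$, applying the assumption ``$U$ is of type $p$'' to this weakly null sequence produces a subsequence $(x_{n_k})$ and a constant $C$ with $\bigl\| \sum_{k=1}^N x_{n_k} \bigr\| \le C N^{\nicefrac{1}{p}}$, and this very estimate then holds in $X$ because the two norms agree on $U$; hence $X$ is of type $p$. For the unconditional type the argument is the same: the subsequence $(x_{n_k})$ furnished by ``$U$ is of unconditional type $p$'' has the property that each of its subsequences obeys the required bound inside $U$, and therefore inside $X$.

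I do not expect a genuine obstacle here. The substance of the proof is the single observation that one may replace $X$ by the closed linear span of the given sequence, together with the routine Hahn--Banach check that weak nullity is preserved under this passage; the suprema-of-intervals bookkeeping at the very end is the only place that calls for a little care.
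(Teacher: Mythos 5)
Your proof is correct and is essentially the paper's argument: both reduce to the observation that a weakly null sequence in $X$ stays weakly null (via Hahn--Banach) in the closed separable subspace spanned by it, so the type condition can be tested there. The only difference is presentational --- you prove the implication ``all closed separable subspaces of type $p$ $\Rightarrow$ $X$ of type $p$'' directly, while the paper runs the same localization contrapositively with a sequence witnessing failure of type $q$ for $q>\Type(X)$.
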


\begin{proof}
	It follows from Lemma~\ref{monotonicity} that $\Type(X) \le \inf_{U} \Type(U)$.
	Hence for $T(X) = \infty$ the first equality is proved. Otherwise, i.e., if $T(X) < \infty$,
	let $q > \Type(X)$ be arbitrary. Then there exists a sequence
	$(x_n)$ in $X$ such that $x_n \wto 0$ in $X$ and
	\[
		\sup_{N \in \mathds{N}} \frac{\bigl\| \sum_{k=1}^N x_{n_k} \bigr\|}{N^{\nicefrac{1}{q}}} = \infty
	\]
	for every subsequence $(x_{n_k})$ of $(x_n)$.
	Denote by $V$ the space closure of the space $\hull\{ x_n : n \in \mathds{N}\}$.
	Then $V$ is a closed, separable subspace of $X$ and
	it follows from the Hahn-Banach theorem that $x_n \wto 0$ in $V$.
	By construction $V$ is not of type $q$, hence $\Type(V) \le q$
	according to (P\ref{interval}). This shows
	$\inf_U \Type(U) \le \Type(V) \le q$. Taking the infimum over all $q > T(X)$
	we obtain $\inf_U \Type(U) \le \Type(X)$.
	The proof for the unconditional type is similar.
\end{proof}

\begin{lemma}\label{isomorphyinvariance}
	If two normed spaces $X$ and $Y$ are isomorphic, then $X$ is of
	type $p$ if and only if $Y$ is of type $p$.
	A similar statement holds for the unconditional type.
	In particular $\Type(X) = \Type(Y)$ and $\UType(X) = \UType(Y)$.
\end{lemma}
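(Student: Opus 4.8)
The plan is to transport weakly null sequences and their partial sums through the isomorphism. Let $T \colon X \to Y$ be a linear homeomorphism. The first step is to observe that $T$ maps weakly null sequences to weakly null sequences: if $x_n \wto 0$ in $X$ and $g \in Y^*$, then $g \circ T \in X^*$, hence $\apply{g}{Tx_n} = \apply{g\circ T}{x_n} \to 0$. Applying the same reasoning to $T^{-1}$ shows that $(x_n)$ is weakly null in $X$ if and only if $(Tx_n)$ is weakly null in $Y$.

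Assuming next that $X$ is of type $p$, I would take an arbitrary weakly null sequence $(y_n)$ in $Y$, put $x_n \coloneqq T^{-1}y_n$, which is weakly null in $X$ by the first step, and apply the definition of type $p$ to produce a subsequence $(x_{n_k})$ and a constant $C > 0$ with $\bigl\| \sum_{k=1}^N x_{n_k} \bigr\| \le C N^{\nicefrac{1}{p}}$ for all $N$. Since $T$ is linear and bounded,
\[
	\Bigl\| \sum_{k=1}^N y_{n_k} \Bigr\| = \Bigl\| T \sum_{k=1}^N x_{n_k} \Bigr\| \le \|T\|\, C\, N^{\nicefrac{1}{p}},
\]
so the subsequence $(y_{n_k})$ witnesses that $Y$ is of type $p$. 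Interchanging the roles of $X$ and $Y$, with $T^{-1}$ in place of $T$, yields the reverse implication.

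For the unconditional type the argument is the same: the subsequence $(x_{n_k})$ furnished by part~\eqref{maindef:uncond} of Definition~\ref{maindef} has the property that every further subsequence has partial sums bounded by a constant times $N^{\nicefrac{1}{p}}$, and applying the bounded operator $T$ to each such partial sum multiplies the corresponding bound by at most $\|T\|$; hence $(y_{n_k}) = (Tx_{n_k})$ has the analogous property in $Y$. The equalities $\Type(X) = \Type(Y)$ and $\UType(X) = \UType(Y)$ then follow by taking suprema in~\eqref{def:type} and~\eqref{def:utype} over the now-identical sets of admissible exponents.

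The only place where anything beyond a one-line estimate is needed is the verification that isomorphisms preserve weak convergence; this is where the topological (rather than merely metric) nature of the isomorphism enters, and it is the natural candidate for the ``main obstacle'', though in truth no serious difficulty is expected.
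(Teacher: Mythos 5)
Your argument is correct and coincides with the paper's own proof: transport a weakly null sequence in $Y$ back through $T^{-1}$, apply the type-$p$ estimate in $X$, and push the partial sums forward with a loss of at most $\|T\|$, then use symmetry; the unconditional case is handled identically. The only difference is that you spell out the (routine) verification that isomorphisms preserve weak nullity, which the paper takes for granted.
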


\begin{proof}
	Let $X$ be of type $p$ and let $J\colon X \to Y$ be an isomorphism.
	Fix a sequence $(y_n)$ satisfying $y_n \wto 0$. Then also
	$x_n \coloneqq J^{-1} y_n \wto 0$. Hence there exists a subsequence
	$(x_{n_k})$ and a constant $C$ such that
	$\bigl\| \sum_{k=1}^N x_{n_k} \bigr\| \le C N^{\nicefrac{1}{p}}$.
	Consequently, for the corresponding subsequence $y_{n_k} = Jx_{n_k}$,
	$\bigl\| \sum_{k=1}^N y_{n_k} \bigr\| \le \|J\| C N^{\nicefrac{1}{p}}$.
	This shows that also $Y$ is of type $p$.
	Since the statement is symmetric with respect to $X$ and $Y$, the claim
	follows for the type. The proof for the unconditional type is
	almost identical.
\end{proof}

\section{Applications to the Linear Dimension}\label{lineardimension}

Banach and Mazur introduced the Banach-Mazur type only for the spaces $\ell^p$ and $L^p$.
They used it as a tool to compare the so-called \emph{linear dimension} of these spaces.
Their ideas can be applied in an abstract situation. As a special case we obtain
their original results. We show Corollary~\ref{incomplindim} as an example,
but the other results about the linear dimensions of $\ell^p$ and $L^p(0,1)$, $p \in (1,\infty)$,
can be obtained in the same way.

Let $X$ and $Y$ be two Banach spaces. We say that \emph{the linear dimension of $X$ does
not exceed the linear dimension of $Y$} if $X$ is isomorphic to a closed subspace of $Y$,
and for this we write $\dim_\ell X \le \dim_\ell Y$.
If neither $\dim_\ell X \le \dim_\ell Y$ nor $\dim_\ell Y \le \dim_\ell X$ holds, we say that
\emph{$X$ and $Y$ are of incomparable linear dimension}.

\begin{lemma}\label{quotienttype}
	Let $X$ and $Y$ be Banach spaces, let $X$ be reflexive, and let $T \in \mathscr{L}(X,Y)$ be onto.
	If $X$ is of type $p \in [1,\infty]$, then also $Y$ is of type $p$.
	A similar statement is true for the unconditional type.
	Hence $\Type(X) \le \Type(Y)$ and $\UType(X) \le \UType(Y)$.
\end{lemma}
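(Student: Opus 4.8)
The plan is to lift a weakly null sequence from $Y$ to one in $X$, exploit the hypothesis there, and transport the resulting growth estimate back through $T$. Since $T$ is a bounded surjection between Banach spaces, the open mapping theorem yields a constant $M>0$ such that every $y \in Y$ admits a preimage $x \in X$ with $Tx = y$ and $\|x\| \le M\|y\|$. So given a sequence $(y_n)$ in $Y$ with $y_n \wto 0$, I would pick preimages $x_n$ with $\|x_n\| \le M\|y_n\|$; as $(y_n)$ is weakly convergent and hence bounded, $(x_n)$ is bounded in $X$.

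This lift need not be weakly null, and repairing that is where the reflexivity of $X$ comes in. Being bounded, $(x_n)$ has a subsequence $(x_{n_j})$ with $x_{n_j} \wto x$ for some $x \in X$. As $T$ is weak-to-weak continuous, $y_{n_j} = Tx_{n_j} \wto Tx$; but $y_{n_j} \wto 0$, so $Tx = 0$. Therefore $z_j \coloneqq x_{n_j} - x$ satisfies $z_j \wto 0$ in $X$ while still $Tz_j = y_{n_j}$. Applying the assumption that $X$ is of type $p$ to $(z_j)$ produces a subsequence $(z_{j_\ell})$ and a constant $C$ with $\bigl\| \sum_{\ell=1}^N z_{j_\ell} \bigr\| \le C N^{\nicefrac{1}{p}}$; since $T$ commutes with finite sums, $\bigl\| \sum_{\ell=1}^N y_{n_{j_\ell}} \bigr\| = \bigl\| T \sum_{\ell=1}^N z_{j_\ell} \bigr\| \le \|T\|\, C\, N^{\nicefrac{1}{p}}$, and $(y_{n_{j_\ell}})$ is a subsequence of $(y_n)$, so $Y$ is of type $p$. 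For the unconditional type I would run the same argument but take $(z_{j_\ell})$ to be the subsequence supplied by the unconditional type of $X$: every subsequence of the associated $(y_{n_{j_\ell}})$ arises from a subsequence of $(z_{j_\ell})$, and pushing the corresponding estimate through $T$ (again at the cost of a factor $\|T\|$) yields the desired bound.

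The only genuine difficulty is the lifting step. A bounded preimage sequence is immediate from the open mapping theorem, but it may fail to be weakly null; reflexivity of $X$ is exactly what lets me extract a weakly convergent subsequence and subtract its weak limit, and since that limit lies in the kernel of $T$ the subtraction leaves the images $y_n$ untouched. Everything else is a routine transfer of the defining estimate across the bounded operator $T$.
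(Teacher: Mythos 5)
Your proposal is correct and follows essentially the same route as the paper: lift $(y_n)$ via the open mapping theorem to a bounded sequence in $X$, use reflexivity to extract a weakly convergent subsequence, subtract the weak limit (which lies in $\ker T$) to obtain a weakly null lift, apply the type hypothesis in $X$, and push the partial-sum estimate through $T$. The treatment of the unconditional case also matches the paper's remark that the same reasoning applies.
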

\begin{proof}
	The operator $T$ is open according to the open mapping theorem. Hence there exists
	$c > 0$ such that for every $y \in Y$ we find $x \in X$ satisfying $Tx = y$ and
	$\|x\| \le c\|y\|$. Let $(y_n)$ be a sequence in $Y$, $y_n \wto 0$, and choose $(x_n)$
	such that $Tx_n = y_n$ and $\|x_n\| \le c\|y_n\|$. Then $(x_n)$ is bounded in $X$.
	After passing to a subsequence we may therefore assume that $x_n \wto x$.
	Then $y_n = Tx_n \wto Tx$, thus $Tx = 0$.
	Letting $\tilde{x}_n \coloneqq x_n - x$ we have $\tilde{x}_n \wto 0$ and
	$T\tilde{x}_n = y_n$. By assumption we can choose a subsequence of $(\tilde{x}_n)$ such that
	$\bigl\| \sum_{k=1}^N \tilde{x}_{n_k} \bigr\| \le C_X N^{\nicefrac{1}{p}}$ for some constant
	$C_X > 0$. But then $\bigl\| \sum_{k=1}^N y_{n_k} \bigr\| \le C_Y N^{\nicefrac{1}{p}}$
	for $C_Y \coloneqq \|T\| C_X$. This shows that also $Y$ is of type $p$.
	The same reasoning works for the unconditional type.
\end{proof}
\begin{remark}
	In particular, the lemma shows the following:
	Let $Y$ be a quotient of a reflexive Banach space $X$, i.e.,
	$Y$ is isometric to $X/Z$ for a closed subspace $Z$ of $X$.
	Then $\Type(X) \le \Type(Y)$ and $\UType(X) \le \UType(Y)$.
	This is similar to Lemma~\ref{monotonicity}.

	However, the condition of $X$ being reflexive cannot be dropped.
	To see this, recall that every separable Banach space is a quotient
	of $\ell^1$~\cite[Section~1.4]{JL01}, but $\Type(\ell^1) = \infty > 1 = \Type(C[0,1])$,
	see Examples~\ref{ell1} and~\ref{C01}.
\end{remark}

\begin{theorem}\label{dualtype}
	Let $X$ and $Y$ be Banach spaces and $\dim_\ell X \le \dim_\ell Y$.
	If $Y$ is of type $p \in [1,\infty]$, then also $X$ is of type $p$,
	thus $\Type(X) \ge \Type(Y)$.
	If $Y$ is reflexive and $Y'$ is of type $q \in [1,\infty]$, then
	also $X'$ is of type $q$, hence $\Type(X') \ge \Type(Y')$.
	Similar statements hold for the unconditional type.
\end{theorem}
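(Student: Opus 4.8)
The plan is to reduce both assertions to the lemmas already established. For the first one, note that $\dim_\ell X \le \dim_\ell Y$ means there is a closed subspace $U \subseteq Y$ and an isomorphism $J \colon X \to U$. If $Y$ is of type $p$, then Lemma~\ref{monotonicity} gives that $U$, carrying the norm induced by $Y$, is of type $p$, and Lemma~\ref{isomorphyinvariance} carries this over to $X$; taking the supremum over all such $p$ yields $\Type(X) \ge \Type(Y)$. I would run the identical argument with the unconditional versions of those two lemmas to get the statement for the unconditional type.

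For the second assertion I would pass to the duals. Keeping $U$ and $J$ from above, the adjoint $J' \colon U' \to X'$ is again an isomorphism, so it suffices to prove that $U'$ is of type $q$. Here I would use that the restriction map $R \colon Y' \to U'$, $R\phi \coloneqq \phi|_U$, is a bounded surjection — surjectivity being exactly the Hahn--Banach extension theorem — and that it induces an isometric isomorphism $Y'/U^\perp \cong U'$, where $U^\perp = \{ \phi \in Y' : \phi|_U = 0 \}$. Since $Y$ is reflexive, $Y'$ is reflexive as well, so Lemma~\ref{quotienttype} applies to $R \in \mathscr{L}(Y', U')$ and shows that $U'$ is of type $q$ whenever $Y'$ is. Transferring back along $J'$ via Lemma~\ref{isomorphyinvariance} gives that $X'$ is of type $q$, hence $\Type(X') \ge \Type(Y')$, and the unconditional statement follows by replacing each cited lemma with its unconditional counterpart.

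I do not anticipate a genuine obstacle: the theorem is essentially a bookkeeping exercise chaining Lemmas~\ref{monotonicity}, \ref{isomorphyinvariance} and~\ref{quotienttype} together with the standard duality $U' \cong Y'/U^\perp$ for a closed subspace $U$ of a Banach space. The only point that deserves attention is ensuring that the reflexivity hypothesis is invoked in the right place — namely to guarantee that the \emph{domain} $Y'$ of the quotient map $R$ is reflexive, which is precisely what Lemma~\ref{quotienttype} requires of the space on which the surjection is defined.
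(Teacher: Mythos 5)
Your argument is correct and essentially the same as the paper's: the first part via Lemmas~\ref{monotonicity} and~\ref{isomorphyinvariance}, and the second via Hahn--Banach surjectivity onto the dual of the subspace, reflexivity of $Y'$, and Lemma~\ref{quotienttype}. The only cosmetic difference is that the paper applies Lemma~\ref{quotienttype} directly to the adjoint $T'\colon Y' \to X'$ of the embedding (shown to be onto by the same Hahn--Banach extension), whereas you factor it as the restriction map $Y' \to U'$ followed by the isomorphism $J'\colon U' \to X'$.
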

\begin{proof}
	The first assertion follows from Lemmata~\ref{isomorphyinvariance} and~\ref{monotonicity}.
	Now let $Y$ (and hence also $Y'$) be reflexive and let $T\colon X \to Y$ be an isomorphism
	from $X$ onto a closed subspace $Z$ of $Y$.
	Then the adjoint $T'\colon Y' \to X'$ is onto. In fact, for $x' \in X'$ define
	$z' \coloneqq x' \circ T^{-1} \in Z'$ and pick an
	extension $y' \in Y'$ of $z'$ which exists according to the Hahn-Banach theorem.
	Then $T'y' = x'$.
	By Lemma~\ref{quotienttype} this proves the second assertion.
\end{proof}

\begin{corollary}[\mbox{\cite[Th\'eor\`eme~XII.7]{Banach32}}]\label{incomplindim}
	The spaces $\ell^p$ and $\ell^q$
	are of incomparable linear dimension for $p,q \in (1,\infty)$, $p \neq q$.
\end{corollary}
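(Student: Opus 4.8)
The plan is to deduce the corollary from Theorem~\ref{dualtype} together with the single piece of concrete information that $\Type(\ell^r) = r$ for every $r \in (1,\infty)$; recall also that each such $\ell^r$ is reflexive and that $(\ell^r)'$ is isometrically isomorphic to $\ell^{r'}$, where $\tfrac{1}{r} + \tfrac{1}{r'} = 1$. Fix $p, q \in (1,\infty)$ and assume without loss of generality that $p < q$; note that then $q' < p'$, since $r \mapsto r'$ is strictly decreasing on $(1,\infty)$. We must show that neither $\dim_\ell \ell^p \le \dim_\ell \ell^q$ nor $\dim_\ell \ell^q \le \dim_\ell \ell^p$ holds.

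First I would pin down $\Type(\ell^r)$. For the upper bound $\Type(\ell^r) \le r$, observe that the canonical unit vector basis $(e_n)$ of $\ell^r$ converges weakly to $0$ (as $r > 1$, every functional lies in $\ell^{r'}$ with $r' < \infty$), while for \emph{every} subsequence and every $N$ one has $\bigl\| \sum_{k=1}^N e_{n_k} \bigr\| = N^{\nicefrac{1}{r}}$, which is not dominated by a multiple of $N^{\nicefrac{1}{s}}$ for any $s > r$; hence $\ell^r$ is not of type $s$ when $s > r$. For the lower bound $\Type(\ell^r) \ge r$, take a weakly null sequence $(x_n)$ in $\ell^r$, necessarily bounded, say by $M$. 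Since weak convergence to $0$ in $\ell^r$ entails coordinatewise convergence to $0$, a gliding-hump selection produces a subsequence $(x_{n_k})$ and finitely supported vectors $y_k$ with pairwise disjoint supports such that $\|x_{n_k} - y_k\| \le 2^{-k}$ and $\|y_k\| \le M$ for all $k$. Because the $\ell^r$-norm is additive over disjoint supports,
\[
	\Bigl\| \sum_{k=1}^N x_{n_k} \Bigr\|
		\le \Bigl\| \sum_{k=1}^N y_k \Bigr\| + \sum_{k=1}^N 2^{-k}
		\le \Bigl( \sum_{k=1}^N \|y_k\|^r \Bigr)^{\nicefrac{1}{r}} + 1
		\le (M+1)\, N^{\nicefrac{1}{r}},
\]
so $\ell^r$ is of type $r$. (If one prefers, this computation can instead be quoted from Section~\ref{examples}.)

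Now the two non-inclusions follow by a clean application of Theorem~\ref{dualtype}. If $\dim_\ell \ell^p \le \dim_\ell \ell^q$, then, since $\ell^q$ is of type $q$, the first part of Theorem~\ref{dualtype} forces $\ell^p$ to be of type $q$, so $\Type(\ell^p) \ge q > p = \Type(\ell^p)$, a contradiction. If instead $\dim_\ell \ell^q \le \dim_\ell \ell^p$, then $\ell^p$ is reflexive and its dual $(\ell^p)' = \ell^{p'}$ is of type $p'$, so the second part of Theorem~\ref{dualtype} forces $(\ell^q)' = \ell^{q'}$ to be of type $p'$, whence $\Type(\ell^{q'}) \ge p' > q' = \Type(\ell^{q'})$, again a contradiction. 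Therefore $\ell^p$ and $\ell^q$ have incomparable linear dimension.

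The only genuine work here is the lower bound $\Type(\ell^r) \ge r$ --- i.e.\ extracting the block subsequence via a gliding-hump argument and controlling the perturbation --- while everything else is formal manipulation of Theorem~\ref{dualtype}. It is worth stressing that passing to duals is unavoidable in the second case: Lemma~\ref{monotonicity} and the first part of Theorem~\ref{dualtype} transport the type in only one direction along an embedding, so they can rule out $\dim_\ell \ell^p \le \dim_\ell \ell^q$ but say nothing about the reverse inclusion; reflexivity of $\ell^p$ is precisely what lets us recover the missing direction through $\Type(\ell^{p'})$.
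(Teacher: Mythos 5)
Your proof is correct and follows essentially the same route as the paper: both deduce the corollary from Theorem~\ref{dualtype} combined with the fact that $\Type(\ell^r) = r$ for $r \in (1,\infty)$, using the dual half of the theorem (via reflexivity and $(\ell^r)' \cong \ell^{r'}$) to handle the direction that a plain embedding argument cannot reach. The only cosmetic differences are that the paper derives $\Type(\ell^p) \ge \Type(\ell^q)$ and $\Type(\ell^{p'}) \ge \Type(\ell^{q'})$ from a single assumed inclusion and concludes $p = q$, whereas you split into the two inclusion directions with $p < q$, and that you re-derive $\Type(\ell^r) = r$ by a gliding-hump argument instead of simply citing Example~\ref{ellp}, which is equally valid.
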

\begin{proof}
	Assume $\dim_\ell \ell^p \le \dim_\ell \ell^q$ for some $p, q \in (1,\infty)$.
	According to Theorem~\ref{dualtype} this implies $\Type(\ell^p) \ge \Type(\ell^q)$
	and $\Type(\ell^{p'}) \ge \Type(\ell^{q'})$. Here $p'$ and $q'$ are the conjugate
	exponents of $p$ and $q$, respectively. By Example~\ref{ellp} this shows
	$p \ge q$ and $\frac{p}{p-1} \ge \frac{q}{q-1}$, implying $p=q$.
\end{proof}

\section{Direct Sums of Banach Spaces}\label{sums}
\begin{proposition}\label{spacesum}
	Let $X_i$ be normed spaces.
	The product $\bigoplus_{i=1}^N X_i$ is of unconditional type $p$
	if and only if all the spaces $X_i$ are of unconditional type $p$.
	In particular, we the unconditional type of the product space is
	$\UType(\bigoplus_{i=1}^N X_i) = \min_i \UType(X_i)$.
\end{proposition}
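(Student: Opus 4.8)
The plan is to prove both implications of the equivalence separately, since Proposition~\ref{spacesum} claims that $\bigoplus_{i=1}^N X_i$ has unconditional type $p$ exactly when every factor does. Throughout I would equip the product with one of the standard equivalent norms, say $\|(x_1,\dots,x_N)\| = \sum_{i=1}^N \|x_i\|_{X_i}$, which is legitimate by Lemma~\ref{isomorphyinvariance} since all product norms on a finite product are equivalent.

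For the ``only if'' direction, suppose $\bigoplus_{i=1}^N X_i$ is of unconditional type $p$. Each $X_i$ embeds isometrically as a (closed) subspace of the product via $x \mapsto (0,\dots,x,\dots,0)$, so by Lemma~\ref{monotonicity} each $X_i$ is of unconditional type $p$ as well. This direction is essentially immediate.

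The substantive part is the ``if'' direction: assuming every $X_i$ is of unconditional type $p$, I want to conclude the same for the product. Let $(v_n)$ be a sequence in $\bigoplus_{i=1}^N X_i$ with $v_n \wto 0$, and write $v_n = (x_n^{(1)},\dots,x_n^{(N)})$. Testing against functionals of the form $(0,\dots,\phi,\dots,0)$ with $\phi \in X_i'$ shows that $x_n^{(i)} \wto 0$ in $X_i$ for each fixed $i$. The idea is to extract subsequences successively: first choose a subsequence along which the first-coordinate sequence has the unconditional-type-$p$ property with its ``good'' subsequence; then thin out further so the second coordinate does too; and so on through all $N$ coordinates. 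Since $N$ is finite, after $N$ steps we arrive at one subsequence $(v_{n_k})$ such that for each $i$, every further subsequence of $(x_{n_k}^{(i)})$ admits a constant $C_i$ with $\bigl\|\sum_{\ell=1}^M x_{n_{k_\ell}}^{(i)}\bigr\|_{X_i} \le C_i M^{\nicefrac{1}{p}}$. (Here I must be slightly careful with the definition: being of unconditional type $p$ means there is a subsequence $all$ of whose further subsequences satisfy the growth bound, so after passing to that subsequence for coordinate $i$, the property is inherited by everything below it — this is what makes the successive-extraction argument work, and is the one point where the ``unconditional'' formulation is genuinely used rather than the plain type.) Then for any subsequence $(v_{n_{k_\ell}})$ of $(v_{n_k})$ and any $M$, the triangle inequality for the chosen product norm gives
\[
	\Bigl\| \sum_{\ell=1}^M v_{n_{k_\ell}} \Bigr\|
		= \sum_{i=1}^N \Bigl\| \sum_{\ell=1}^M x_{n_{k_\ell}}^{(i)} \Bigr\|_{X_i}
		\le \sum_{i=1}^N C_i M^{\nicefrac{1}{p}}
		= \Bigl( \sum_{i=1}^N C_i \Bigr) M^{\nicefrac{1}{p}},
\]
so $C \coloneqq \sum_{i=1}^N C_i$ works uniformly. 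This establishes that the product is of unconditional type $p$.

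Finally, the ``in particular'' statement about $\UType$ follows formally: property (P\ref{interval}) tells us the set of admissible $p$ for each space is an interval of the form $[1,\UType(X_i)]$ (or $[1,\infty]$), and the equivalence just proved shows the product's interval is the intersection $\bigcap_i [1,\UType(X_i)]$, whose supremum is $\min_i \UType(X_i)$; one should note the endpoint itself need not be attained, but this does not affect the supremum. The main obstacle I anticipate is purely bookkeeping: stating the iterated subsequence extraction cleanly and making explicit why the ``for every further subsequence'' clause in Definition~\ref{maindef}\eqref{maindef:uncond} is exactly what permits combining the $N$ coordinatewise conclusions — with the plain Banach-Mazur type this argument would break down, which is presumably why the proposition is stated for the unconditional type.
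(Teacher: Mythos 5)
Your proof is correct and follows essentially the same route as the paper: pass to the sum norm via Lemma~\ref{isomorphyinvariance}, note coordinatewise weak convergence, extract a single subsequence good in every coordinate (your explicit successive extraction is exactly how the paper's terse ``pick subsequences which simultaneously satisfy the growth conditions'' is justified, and is precisely where unconditionality is needed, as the paper's subsequent remark confirms), and finish with the triangle inequality, with the converse via Lemma~\ref{monotonicity}. The only cosmetic differences are that the paper reduces to $N=2$ and leaves the ``in particular'' supremum computation implicit, which you spell out correctly.
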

\begin{proof}
	Without loss of generality we prove the claim only for $N=2$.
	According to Lemma~\ref{isomorphyinvariance} we may choose
	the norm $\| (x, y) \| \coloneqq \| x \|_{X_1} + \| y \|_{X_2}$
	on $X_1 \oplus X_2$. First let $X_1$ and $X_2$ be of unconditional type $p$.
	Let $(x_n, y_n) \wto 0$ in $X_1 \oplus X_2$. Then $x_n \wto 0$ in $X_1$
	and $y_n \wto 0$ in $X_2$.
	We pick subsequences $(x_{n_k})$ of $(x_n)$ and $(y_{n_k})$ of $(y_n)$
	which simultaneously satisfy the growth conditions as in the definition
	of the unconditional type. Then for any subsequence we can estimate
	\[
		\Bigl\| \sum_{\ell=1}^N \bigl( x_{n_{k_\ell}}, y_{n_{k_\ell}} \bigr) \Bigr\|
			= \Bigl\| \sum_{\ell=1}^N x_{n_{k_\ell}} \Bigr\| + \Bigl\| \sum_{\ell=1}^N y_{n_{k_\ell}} \Bigr\|
			\le 2C N^{\nicefrac{1}{p}}.
	\]
	Thus $X_1 \oplus X_2$ is of unconditional type $p$.

	If, on the other hand, $X_1 \oplus X_2$ are of unconditional type $p$,
	then also its subspaces $X_1$ and $X_2$ are of unconditional type $p$
	according to Lemma~\ref{monotonicity}.
\end{proof}

\begin{remark}
	The above argument does not work for the type because it is not obvious whether we can pick subsequences
	which simultaneously satisfy the growth conditions. However, if $X_1$ is of
	type $p_1$ and $X_2$ is of unconditional type $p_2$, then
	$X_1 \oplus X_2$ is of type $\min\{ p_1, p_2 \}$, but not of type $q$ for any
	$q > \min\{ p_1, p_2 \}$. The proof is essentially the same as the above.
\end{remark}

\begin{proposition}
	Let $X_i$ be normed subspaces of unconditional type $p$ which are contained in
	a common vector space $Z$. Then the normed space
	$U \coloneqq \bigcap_{i=1}^N X_i$ equipped with the usual norm 
	$\| x \|_U \coloneqq \sum_{i=1}^N \| x \|_{X_i}$ is of unconditional
	type $p$. In particular, $\UType(\bigcap_i X_i) \ge \min_i \UType(X_i)$.
\end{proposition}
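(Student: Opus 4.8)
The plan is to realise $U$ isometrically as a subspace of the product $\bigoplus_{i=1}^N X_i$ and then to quote the two facts already at our disposal: that products preserve the unconditional type (Proposition~\ref{spacesum}) and that subspaces preserve it (Lemma~\ref{monotonicity}). Concretely, I would equip $\bigoplus_{i=1}^N X_i$ with the norm $\| (x_1,\dots,x_N) \| \coloneqq \sum_{i=1}^N \| x_i \|_{X_i}$, which is legitimate by the reduction used in the proof of Proposition~\ref{spacesum}, and consider the diagonal map $\iota\colon U \to \bigoplus_{i=1}^N X_i$ given by $\iota(x) \coloneqq (x,\dots,x)$. This is well defined because every $x \in U = \bigcap_i X_i$ lies in each $X_i$, it is clearly linear, and $\| \iota(x) \| = \sum_{i=1}^N \| x \|_{X_i} = \| x \|_U$, so $\iota$ is an isometric isomorphism of $U$ onto the linear subspace $\iota(U)$ of the product.

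With this identification the result is immediate. Since every $X_i$ is of unconditional type $p$, Proposition~\ref{spacesum} shows that $\bigoplus_{i=1}^N X_i$ is of unconditional type $p$; Lemma~\ref{monotonicity} then shows that its subspace $\iota(U)$ is of unconditional type $p$; and Lemma~\ref{isomorphyinvariance} transports this back to $U$. Applying this with $p = \min_i \UType(X_i)$ (or, to be careful about the endpoint, with every $p < \min_i \UType(X_i)$ and then passing to the supremum) yields the stated inequality $\UType(\bigcap_{i=1}^N X_i) \ge \min_i \UType(X_i)$.

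I do not anticipate a genuine obstacle. The only point that merits a remark is the passage of a weak null sequence in $U$ to a weak null sequence in $\iota(U) \subseteq \bigoplus_i X_i$, but that is exactly what is carried out inside the proof of Lemma~\ref{monotonicity} by means of the Hahn--Banach theorem, so nothing new is required. It is perhaps worth noting why only an inequality can be expected: the intersection is genuinely smaller than any single factor — it may, for instance, be finite dimensional, in which case its unconditional type is $\infty$ — so $\UType(\bigcap_i X_i)$ can strictly exceed $\min_i \UType(X_i)$.
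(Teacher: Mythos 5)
Your proposal is correct and coincides with the paper's own argument: the paper likewise identifies $\bigcap_i X_i$ with the diagonal in $\bigoplus_i X_i$ (with the sum norm) and then invokes Proposition~\ref{spacesum} together with Lemma~\ref{monotonicity}. Your additional remarks on the isometry and on sharpness are fine but not needed beyond that.
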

\begin{proof}
	The space $\bigoplus_i X_i$ is of unconditional type $p$ by Proposition~\ref{spacesum}.
	Since $\bigcap_i X_i$ can be identified with the diagonal in $\bigoplus_i X_i$,
	Lemma~\ref{monotonicity} shows that also $\bigcap_i X_i$ is of unconditional type $p$.
\end{proof}

\begin{remark}
	The above estimate can in general not be improved.
	In fact, if $\bigcap_i X_i = \{0\}$,
	then $\UType(\bigcap_i X_i) = \infty$ independently of the unconditional types of the $X_i$.
\end{remark}

\section{Some Relations to Schauder bases}\label{schauderbases}
The following proposition can be applied to calculate the type of a whole class of Banach spaces.
We will use it to determine the type of $\ell^p$ and $c_0$.
Recall that a sequence $(x_n)$ in a Banach space $X$ is called a \emph{Schauder basis}
of $X$ if every $x \in X$ has a unique representation as a convergent series
$x = \sum_i \alpha_i x_i$. A sequence $(y_n)$ is called a \emph{basic sequence}
if it is a Schauder basis of $\overline{\hull}\{ y_n : n \in \mathds{N} \}$.
A basic sequence $(y_n)$ is called a \emph{block basic sequence of the basic sequence $(y_n)$}
if it can be written in the form $u_n = \sum_{i=m_n}^{m_{n+1}-1} \alpha_i y_i$ with a strictly increasing
sequence $(m_n)$ of natural numbers.
We call two basic sequences $(x_n)$ and $(y_n)$ \emph{equivalent} if
$Sx_n = y_n$ extends to an isomorphism between the spans of the sequences.

\begin{proposition}\label{blockbasis}
	Let $p \in [1,\infty]$ and $X$ be a Banach space.
	Let $(e_n)$ be a Schauder basis and assume that for every bounded block basic sequence
	$(y_\ell)$ of $(e_n)$ there exists a constant $c$ such that
	$\bigl\| \sum_{\ell=1}^N y_\ell \bigr\| \le c N^{\nicefrac{1}{p}}$ for all $N$.
	Then $X$ is of unconditional type $p$.
\end{proposition}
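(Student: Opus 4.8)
The plan is to take an arbitrary weakly null sequence in $X$ and, after passing to a suitable subsequence, bring it into one of two standard forms --- either absolutely summable in norm, or equivalent to a bounded block basic sequence of $(e_n)$ --- from which the growth estimate is immediate. So let $(x_n)$ be a sequence in $X$ with $x_n \wto 0$. It is bounded, being weakly convergent, and since every coordinate functional associated with the basis $(e_n)$ belongs to $X'$, the sequence $(x_n)$ tends to $0$ coordinatewise. Now I would distinguish two cases according to the value of $\liminf_n \|x_n\|$. If $\liminf_n \|x_n\| = 0$, I would choose a subsequence along which $\|x_{n_k}\| \to 0$ and, thinning it once more, arrange that $\sum_k \|x_{n_k}\| < \infty$; then for this subsequence and every further subsequence $(x_{n_{k_\ell}})$ one has $\bigl\| \sum_{\ell=1}^N x_{n_{k_\ell}} \bigr\| \le C$ for all $N$, where $C \coloneqq \sum_k \|x_{n_k}\| < \infty$, and in particular $\bigl\| \sum_{\ell=1}^N x_{n_{k_\ell}} \bigr\| \le C\, N^{\nicefrac{1}{p}}$, which is the desired conclusion in that case.

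In the remaining case $\liminf_n \|x_n\| > 0$, I would first pass to a subsequence which is bounded away from $0$ in norm; it is also bounded above, hence seminormalized. Here I would invoke the Bessaga--Pe\l czy\'nski selection principle: a seminormalized, coordinatewise null sequence in a Banach space with Schauder basis $(e_n)$ has a subsequence $(x_{n_k})$ that is equivalent to a block basic sequence $(y_k)$ of $(e_n)$. Let $S$ denote the corresponding isomorphism between the closed spans, so that $S x_{n_k} = y_k$. Since $(x_{n_k})$ is bounded and $S$ is bounded, $(y_k)$ is a bounded block basic sequence of $(e_n)$.

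It then remains to verify the growth condition for every subsequence of $(x_{n_k})$. Given a subsequence $(x_{n_{k_\ell}})$, its image under $S$ is the subsequence $(y_{k_\ell})$ of $(y_k)$. Since the coefficients within a block are permitted to vanish, one may fill the gaps between consecutive blocks of $(y_{k_\ell})$ with zero coefficients and thereby view $(y_{k_\ell})$ itself as a bounded block basic sequence of $(e_n)$ in the sense of the definition preceding the proposition. By hypothesis there is a constant $c$ with $\bigl\| \sum_{\ell=1}^N y_{k_\ell} \bigr\| \le c\, N^{\nicefrac{1}{p}}$ for all $N$, and applying $S^{-1}$ yields $\bigl\| \sum_{\ell=1}^N x_{n_{k_\ell}} \bigr\| = \bigl\| S^{-1}\bigl( \sum_{\ell=1}^N y_{k_\ell} \bigr) \bigr\| \le \|S^{-1}\|\, c\, N^{\nicefrac{1}{p}}$. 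As the definition of the unconditional Banach-Mazur type allows the constant to depend on the chosen subsequence, this shows that $X$ is of unconditional type $p$.

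I expect the one substantial ingredient to be the Bessaga--Pe\l czy\'nski selection principle, in particular the perturbation argument turning a sequence that is close to a block basic sequence into one that is actually equivalent to it; the rest is routine bookkeeping, the only point worth a remark being the zero-coefficient padding that lets a subsequence of a block basic sequence count again as a block basic sequence in the no-gap form adopted here.
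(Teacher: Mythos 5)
Your proof is correct and follows essentially the same route as the paper: the same case distinction on $\liminf_n\|x_n\|$, the same use of the Bessaga--Pe\l czy\'nski selection principle (the paper cites it as Proposition~1.a.12 of Lindenstrauss--Tzafriri) to pass to a subsequence equivalent to a bounded block basic sequence, and the same transfer of the hypothesis through the isomorphism, with the constant allowed to depend on the subsequence. Your remark about zero-padding so that a subsequence of a block basic sequence again qualifies as one is a small point the paper glosses over, but it changes nothing substantive.
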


\begin{proof}
	Let $(x_n)$ be a sequence in $X$ such that $x_n \wto 0$. We have to show
	that there exists a subsequence $(x_{n_k})$ such that every subsequence
	$(x_{n_{k_\ell}})$ of $(x_{n_k})$ we find a constant $C>0$ satisfying
	\begin{equation}\label{desestimate}
		\Bigl\| \sum_{\ell=1}^N x_{n_{k_\ell}} \Bigr\| \le C N^{\nicefrac{1}{p}} \text{ for all } N \in \mathds{N}.
	\end{equation}
	This is trivial if $\liminf_n \| x_n \| = 0$; in this case
	we may pick $C \coloneqq 1$ for a subsequence $(x_{n_k})$ such that $\| x_{n_k} \| \le 2^{-k}$.
	So without loss of generality (passing to a subsequence if necessary) we may
	assume that there are constants such that $0 < \eps_0 \le \| x_n \| \le M$ for all $n$.
	Now it is possible to pick a subsequence $(x_{n_k})$ of $(x_n)$ which is
	equivalent to a bounded block basic sequence $(y_k)$ of $(e_n)$,
	see~\cite[Proposition~1.a.12]{Sequences77}.
	In particular, the linear mapping $S\colon \hull\{ y_k \} \to X$ defined by
	$Sy_k \coloneqq x_{n_k}$ is bounded.
	Let $(x_{n_{k_\ell}})$ be any subsequence of $(x_{n_k})$. Then $(x_{n_{k_\ell}})$
	is equivalent to the bounded block basic sequence $(y_{k_\ell})$ of $(e_n)$.
	From the assumption we then obtain \eqref{desestimate} for $C \coloneqq c \left\|S\right\|$.
	This concludes the proof.
\end{proof}

The following lemma describes a situation in which we can assure that the type
and the unconditional type coincide.

\begin{proposition}\label{typeequncond}
	Let $p \in [1,\infty]$.
	Assume that $X$ is a Banach space with the following property.
	Every basic sequence $(x_n)$ in $X$ satisfying $x_n \wto 0$ and $\liminf \| x_n \| > 0$
	possesses a subsymmetric subsequence $(x_{n_k})$, i.e.,
	every subsequence $(x_{n_{k_\ell}})$ of $(x_{n_k})$ is equivalent to $(x_{n_k})$.
	Then $X$ is of type $p$ if and only if $X$ is of unconditional type $p$.
\end{proposition}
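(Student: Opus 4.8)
The plan is to exploit property (P\ref{untypetotype}): one direction is free, since every space of unconditional type $p$ is automatically of type $p$. So the work is entirely in the converse: assuming $X$ is of type $p$ and that every weakly null basic sequence with $\liminf\|x_n\|>0$ has a subsymmetric subsequence, I want to deduce that $X$ is of unconditional type $p$. Fix a weakly null sequence $(x_n)$ in $X$. As in the proof of Proposition~\ref{blockbasis}, I would first dispose of the trivial case $\liminf\|x_n\|=0$ by passing to a subsequence with $\|x_{n_k}\|\le 2^{-k}$, which gives the unconditional estimate with $C=1$ for every further subsequence. Otherwise, pass to a subsequence so that $0<\eps_0\le\|x_n\|\le M$ for all $n$, and then — using the standard gliding-hump argument, e.g.\ \cite[Proposition~1.a.12]{Sequences77} — pass to a further subsequence that is a basic sequence (indeed equivalent to a block basic sequence of any fixed Schauder basis, but I only need that it is basic). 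Relabel so that $(x_n)$ itself is a weakly null basic sequence with $\liminf\|x_n\|>0$.

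Now apply the hypothesis: there is a subsymmetric subsequence $(x_{n_k})$, meaning every subsequence $(x_{n_{k_\ell}})$ of it is equivalent to $(x_{n_k})$ itself. This is the key structural input. Let $(x_{n_{k_\ell}})$ be an arbitrary subsequence of $(x_{n_k})$; I must produce a constant $C$ with $\bigl\|\sum_{\ell=1}^N x_{n_{k_\ell}}\bigr\|\le C N^{1/p}$ for all $N$. Since $X$ is of type $p$ and $(x_{n_{k_\ell}})\wto 0$, there is some sub-subsequence $(x_{n_{k_{\ell_j}}})$ and a constant $C_0$ with $\bigl\|\sum_{j=1}^N x_{n_{k_{\ell_j}}}\bigr\|\le C_0 N^{1/p}$ for all $N$. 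But $(x_{n_{k_{\ell_j}}})$ is a subsequence of the subsymmetric sequence $(x_{n_k})$, hence equivalent to $(x_{n_k})$; and $(x_{n_{k_\ell}})$ is also a subsequence of $(x_{n_k})$, hence equivalent to $(x_{n_k})$ as well. By transitivity, the isomorphism $S\colon \overline{\hull}\{x_{n_{k_{\ell_j}}}\}\to\overline{\hull}\{x_{n_{k_\ell}}\}$ sending $x_{n_{k_{\ell_j}}}\mapsto x_{n_{k_\ell}}$ (for the $j$-th term mapping to the $\ell$-th term, i.e.\ respecting the order) is bounded, and applying $S$ to $\sum_{j=1}^N x_{n_{k_{\ell_j}}}$ yields $\bigl\|\sum_{\ell=1}^N x_{n_{k_\ell}}\bigr\|\le\|S\|\,C_0\,N^{1/p}$. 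Setting $C\coloneqq\|S\|C_0$ finishes this subsequence, and since $(x_{n_{k_\ell}})$ was arbitrary, $X$ is of unconditional type $p$.

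The step I expect to be the main subtlety is the bookkeeping in the last paragraph: one must be careful that "equivalence" of two subsequences is understood as the order-preserving map sending the $m$-th element of one to the $m$-th element of the other, so that the diagram of the three equivalences (subsequence of $(x_{n_k})$ vs.\ $(x_{n_k})$, twice, then composed) actually sends partial sums to partial sums with matching index ranges. Once the indices are lined up correctly, the estimate is a one-line application of the bounded linear map $S$, and no computation is needed. A secondary minor point is to double-check that passing to the basic subsequence does not destroy $\liminf\|x_n\|>0$, which is immediate since subsequences only help the $\liminf$.
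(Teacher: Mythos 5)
Your proof is correct and follows essentially the same route as the paper: reduce to a weakly null basic sequence bounded away from zero, take the subsymmetric subsequence, and transfer the type-$p$ partial-sum estimate to an arbitrary subsequence via the order-preserving equivalence isomorphism. The only cosmetic difference is that the paper applies the type hypothesis once, to the subsymmetric subsequence itself, and then pushes that single estimate to every subsequence, whereas you invoke the type hypothesis anew inside each subsequence and transfer back; both are valid since the constant may depend on the subsequence.
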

\begin{proof}
	One implication is true in general, see (P\ref{untypetotype}).
	So now let $X$ be of type $p$ and let $(x_n)$ be such that $x_n \wto 0$.
	As is the proof of Proposition~\ref{blockbasis} we may without loss of generality
	assume that $\liminf \| x_n \| > 0$.
	By passing to a subsequence if necessary we can achieve that $(x_n)$ is basic sequence,
	see the remarks following the proof of~\cite[Theorem~1.a.5]{Sequences77}.
	By assumption there exists a subsymmetric subsequence $(x_{n_k})$ of $(x_n)$.
	We claim that $(x_{n_k})$ has the property which Definition~\ref{maindef}.\eqref{maindef:uncond} requests.
	Since $X$ is of type $p$ we can find a subsequence $(x_{n_{\tilde{k}_\ell}})$ of $(x_{n_k})$
	and a constant $C > 0$ such that
	\begin{equation}\label{subsymmest}
		\Bigl\| \sum_{\ell=1}^N x_{n_{\tilde{k}_\ell}} \Bigr\| \le C N^{\nicefrac{1}{p}}
	\end{equation}
	holds for all $N \in \mathds{N}$.
	By subsymmetry $(x_{n_{\tilde{k}_\ell}})$ is equivalent to $(x_{n_k})$.
	Now if $(x_{n_{k_\ell}})$ is any subsequence of $(x_{n_k})$,
	by assumption $(x_{n_{k_\ell}})$ is equivalent to $(x_{n_k})$
	and hence also to $(x_{n_{\tilde{k}_\ell}})$.
	Thus, as in the proof of Proposition~\ref{blockbasis},
	\eqref{subsymmest} remains valid for $(x_{n_{k_\ell}})$ if we multiply
	$C$ by the operator norm of the corresponding isomorphism.
\end{proof}
\begin{example}
	The spaces $c_0$ and $\ell^p$, $1 \le p < \infty$ satisfy the assumptions
	of the Lemma~\ref{typeequncond}. To see this, let $(e_n)$ be the usual Schauder
	basis of the respective space and let $(x_n)$ be as in the lemma.
	We can find a subsequence $(x_{n_k})$ of $(x_n)$ which is equivalent to a block
	basic sequence of $(e_n)$, see~\cite[Proposition~1.a.12]{Sequences77}.
	Then $(x_{n_k})$ is equivalent to $(e_n)$~\cite[Proposition~2.a.1]{Sequences77}
	and hence subsymmetric.
\end{example}

\section{Examples}\label{examples}

First we consider a class of spaces whose type is particularly easy to compute.
A normed space $X$ is called a \emph{Schur space} if
every weakly convergent sequence is convergent in norm.
Examples of Schur spaces are finite dimensional spaces and $\ell^1$,
cf.~\cite[p.~137]{Banach32}.
\begin{example}\label{ell1}
	Every Schur space $X$ is of unconditional type $\infty$, hence
	$\UType(X) = \Type(X) = \infty$.
\end{example}
\begin{proof}
	Let $(x_n)$ be a sequence in $X$ such that $x_n \wto 0$.
	Then $x_n \to 0$. Thus there exists a subsequence
	$(x_{n_k})$ such that $\| x_{n_k} \| \le 2^{-k}$.
	In particular, $\sum_{k=1}^N x_{n_{k_\ell}}$ is bounded in $X$
	for every subsequence $(x_{n_{k_\ell}})$ of $(x_{n_k})$.
	This shows that $X$ is of unconditional type $\infty$.
	Using (P\ref{untypetotype}), the claim is proved.
\end{proof}

\begin{example}\label{ellp}
	For every $p \in (1,\infty)$ the space $\ell^p$ is of unconditional type $p$,
	but not of type $q$ for any $q > p$, hence $\UType(\ell^p) = \Type(\ell^p) = p$.
	Moreover, $c_0$ is of unconditional type $\infty$, hence
	$\UType(c_0) = \Type(c_0) = \infty$.
\end{example}
\begin{proof}
	Consider the sequence $(e_n)$ of canonical unit vectors in $\ell^p$.
	It is easy to see that $e_n \wto 0$ and that for any subsequence
	$(e_{n_k})$ we obtain
	\[
		\Bigl\| \sum_{k=1}^N e_{n_k} \Bigr\|_p
			= N^{\nicefrac{1}{p}}
		\quad \text{for all $N \in \mathds{N}$.}
	\]
	This shows that $\ell^p$ is not of type $q$ for any $q > p$.

	To show that $\ell^p$ is of unconditional type $p$ and that $c_0$
	is of unconditional type $\infty$ we apply Proposition~\ref{blockbasis}
	to the Schauder basis $(e_n)$ of canonical unit vectors.
	First consider the case of $\ell^p$.
	For any bounded block basic sequence $(y_\ell)$ of $(e_n)$ we obtain
	$\bigl\| \sum_{\ell=1}^N y_\ell \bigr\|_p^p = \sum_{\ell=1}^N \| y_\ell \|_p^p \le M^p N$
	by an easy calculation, where $M$ denotes an upper bound for $\|y_n\|_p$.
	Similarly, for a bounded block basic sequence $(y_\ell)$ in $c_0$ we have
	$\bigl\| \sum_{\ell=1}^N y_\ell \bigr\|_\infty = \max_\ell \| y_\ell \|_\infty \le M$,
	where $M$ denotes an upper bound for $\|y_n\|_\infty$.
	Now Proposition~\ref{blockbasis} asserts that $\ell^p$ is of unconditional type $p$ and
	$c_0$ is of unconditional type $\infty$.
	The remaining assertions follow from (P\ref{interval}) and (P\ref{untypetotype}).
\end{proof}

\begin{remark}\label{ellpellqnoniso}
	Example~\ref{ellp} provides a proof (via Lemma~\ref{isomorphyinvariance})
	of the fact that the sequence spaces $\ell^p$ and $\ell^q$
	are not isomorphic for $p \neq q$.
	Of course this is well-known and follows also from Pitt's Theorem~\cite[\S 42.3.(10)]{Koethe79} or
	the calculation of the Rademacher types and cotypes of these spaces~\cite{Schwartz81}.
\end{remark}

\begin{example}
	Let $I$ be an arbitrary infinite index set. We define $\ell^p(I)$ for $1 \le p < \infty$ as usual,
	i.e., $\ell^p(I)$ is the completion of the space of families $x = (x_\alpha)_{\alpha \in I}$
	where only finitely many components $x_\alpha$ do not vanish, with respect to the norm
	$\| x \|_p \coloneqq \sum_{\alpha \in I} |x_\alpha|^p$.
	Then $\Type(\ell^p(I)) = \UType(\ell^p(I)) = p$.
	In particular, $\ell^p(I_1)$ and $\ell^q(I_2)$ are not isomorphic if $p \neq q$
	and either one of $I_1$ and $I_2$ is infinite.
\end{example}
\begin{proof}
	Since $\ell^p$ can naturally considered as a closed subspace of $\ell^p(I)$,
	Lemma~\ref{monotonicity} ensures that $\Type(\ell^p(I)) \le p$.
	If on the other hand $(x_n)$ is any sequence in $\ell^p(I)$ such that $x_n \wto 0$ in $\ell^p(I)$,
	then the set $J$ of indices where at least one of the $x_n$ does not vanish, is at most countable.
	So $(x_n)$ is a sequence in the subspace $\ell^p(J)$ of $\ell^p(I)$ and $x_n \wto 0$ in $\ell^p(J)$.
	Because $\ell^p(J)$ is isomorphic to $\ell^p$ or a finite dimensional space,
	the estimate on the partial sums follows either from Example~\ref{ell1} or Example~\ref{ellp}.
	This shows $\UType(\ell^p(I)) \ge p$.
\end{proof}

\begin{example}[Hilbert spaces]
	Every infinite dimensional inner product space $H$ is of unconditional type $2$,
	but not of type $q$ for any $q > 2$, hence $\UType(H) = \Type(H) = 2$.
\end{example}
\begin{proof}
	The case $H = \ell^2$ is a special case of Example~\ref{ellp}.
	Thus the claim holds for every infinite dimensional separable Hilbert space due to
	Lemma~\ref{isomorphyinvariance}.
	Now the assertion follows from Lemmata~\ref{completion} and~\ref{separable}.
\end{proof}

So far we have only seen examples of Banach spaces $X$ with $\Type(X) = \UType(X) = p$
for $p \in (1,\infty]$. The following example shows that also $p=1$ is possible.
\begin{example}\label{C01}
	$\Type(C[0,1]) = \UType(C[0,1]) = 1$.
\end{example}
\begin{proof}
	In fact, it is known that every separable Banach space $X$ is
	isometric to a closed subspace of $C[0,1]$~\cite[Section~1.4]{JL01}. Hence
	by Lemma~\ref{separable}
	\[
		\UType(C[0,1]) \le \Type(C[0,1]) \le \inf_{p \in (1,\infty)} \Type(\ell^p) = 1. \qedhere
	\]
\end{proof}

Recall that the Rademacher cotype of $\ell^p$ and $L^p(0,1)$ is $p$ for every $2 < p < \infty$.
For the Banach-Mazur type the situation is different as the following example shows.
\begin{example}\label{Lptype}
	Let $2 < p < \infty$. Then $\UType(L^p(0,1)) = \Type(L^p(0,1)) = 2$.
\end{example}
\begin{proof}
	It can be shown that $L^p(0,1)$ contains an isomorphic copy of $\ell^2$~\cite[Section~1.4]{JL01},
	so by Lemma~\ref{monotonicity} the type of $L^p(0,1)$ can be at most $2$.
	But on the other hand, if $(f_n)$ is a sequence in $L^p(0,1)$ such that
	$0 < \limsup_n \|f\|_n < \infty$, then $(f_n)$ has a subsequence which is equivalent
	to the unit vector sequence in either $\ell^2$ or $\ell^p$~\cite[Section~1.8]{JL01}. In both cases
	we obtain the estimate for the unconditional type, thus showing $\UType(L^p(0,1)) \ge 2$,
	and the assertion is proved.
\end{proof}
\begin{remark}
	It is possible to show that in fact $\Type(L^p(0,1)) = p$ for $p \in (1,2]$, see~\cite[Th\'eor\`eme~XII.2]{Banach32}.
\end{remark}
This shows that the spaces $\ell^p$ and $L^p(0,1)$ are non-isomorphic at least for $p \in (2,\infty)$
which does not follow from the value of the Rademacher types of these spaces.
Thus the Banach-Mazur type carries interesting information about a Banach space
which escapes some other concepts.

\section{Open Problems}\label{openproblems}
\begin{question}
	Does any Banach space $X$ satisfy $\UType(X) = \Type(X)$?
\end{question}
\begin{question}
	If $X$ is of type $p \coloneqq \Type(X) < \infty$, is there a sequence $(x_n)$ in $X$
	such that any subsequence $(x_{n_k})$ of $(x_n)$ satisfies
	\[
		\limsup_{N \to \infty} \frac{\bigl\| \sum_{k=1}^N x_{n_k} \bigr\|}{N^{\nicefrac{1}{p}}} > 0,
	\]
	and similarly for the unconditional type? In other words, is there always a sequence
	for which the growth estimate is sharp?
\end{question}
\begin{question}
	Can the constant $C$ in~\eqref{def:type} and~\eqref{def:utype} always be chosen as
	\[
		C \le c_p \limsup_{n \to \infty} \| x_n \|,
	\]
	where $c_p$ is some universal constant depending only on $p$?
\end{question}

\section*{Acknowledgements}
The author expresses his gratitude to Prof.~Pietsch for valuable hints regarding the Banach-Mazur type,
in particular for pointing out that this concept has already been investigated
by Banach and Mazur.

The author thanks the Graduate School \textsl{Mathematical Analysis of Evolution, Information and Complexity}
for their support during the work on this article.

\bibliographystyle{amsalpha}
\bibliography{type}

\providecommand{\bysame}{\leavevmode\hbox to3em{\hrulefill}\thinspace}
\providecommand{\MR}{\relax\ifhmode\unskip\space\fi MR }
\providecommand{\MRhref}[2]{%
  \href{http://www.ams.org/mathscinet-getitem?mr=#1}{#2}
}
\providecommand{\href}[2]{#2}
\begin{thebibliography}{Gow94}

\bibitem[Ban32]{Banach32}
S.~Banach, \emph{{Th{\'e}orie des Op{\'e}rations Lin{\'e}aires}}, Monografie
  Matematyczne, 1932.

\bibitem[GM93]{GM93}
W.T. Gowers and B.~Maurey, \emph{{The unconditional basic sequence problem}},
  J. Amer. Math. Soc. \textbf{6} (1993), 851--874.

\bibitem[Gow94]{Gow94}
W.T. Gowers, \emph{{Recent results in the theory of infinite-dimensional Banach
  spaces}}, Proc. ICM Z\"urich, 1994, pp.~933--942.

\bibitem[JL01]{JL01}
W.B. Johnson and J.~Lindenstrauss, \emph{{Handbook of the Geometry of Banach
  Spaces}}, North Holland, 2001.

\bibitem[K{\"o}t79]{Koethe79}
G.~K{\"o}the, \emph{{Topological Vector Spaces II}}, Springer, 1979.

\bibitem[Kwa72]{Kwapien72}
S.~Kwapie\'n, \emph{{Isomorphic characterizations of inner product spaces by
  orthogonal series with vector valued coefficients}}, Studia Math \textbf{44}
  (1972), 583--595.

\bibitem[LT77]{Sequences77}
J.~Lindenstrauss and L.~Tzafriri, \emph{{Classical Banach Spaces I: Sequence
  Spaces}}, Springer, 1977.

\bibitem[Sch81]{Schwartz81}
L.~Schwartz, \emph{{Geometry and Probability in Banach Spaces}}, American
  Mathematical Society \textbf{4} (1981), no.~2, 135--141.

\bibitem[Tsi74]{Tsi74}
B.S. Tsirel'son, \emph{{Not every Banach space contains an imbedding of
  {$\ell_p$} or {$c_0$}}}, Functional Analysis and Its Applications \textbf{8}
  (1974), no.~2, 57--60.

\end{thebibliography}

\end{document}